\documentclass[11pt]{amsart}
\usepackage[margin=1in]{geometry}
\usepackage{amssymb,amsmath}
\usepackage{hyperref}

\newtheorem{theorem}{Theorem}[section]
\newtheorem{lemma}[theorem]{Lemma}
\newtheorem{proposition}[theorem]{Proposition}

\newtheorem{mainthm}{Theorem}

\theoremstyle{definition}
\newtheorem{definition}[theorem]{Definition}
\newtheorem{remark}[theorem]{Remark}
\newtheorem{example}[theorem]{Example}
\newtheorem{question}[theorem]{Question}

\DeclareMathOperator{\Ann}{Ann}
\DeclareMathOperator{\Spec}{Spec}
\newcommand{\p}{\mathfrak p}
\newcommand{\q}{\mathfrak q}
\newcommand{\Z}{\mathbb Z}

\newcommand{\R}{\mathbb R}

\title[Semiprime and real submodules]{Semiprime and real submodules of arbitrary modules\\ are intersections of prime and real prime submodules}
\author{Jakob Cimpri\v c}
\address{University of Ljubljana, Faculty of Mathematics and Physics, Department of Mathematics, Jadranska 21, SI-1000 Ljubljana, Slovenia, and Institute of Mathematics, Physics and Mechanics, Jadranska 19, SI-1000 Ljubljana, Slovenia}
\email{jaka.cimpric@fmf.uni-lj.si}
\thanks{The author acknowledges the financial support from the Slovenian Research and Innovation Agency (ARIS), research core funding No.\ P1-0222 
and research projects No.\ J1-60011 
and No.\ J1-50002.} 

\subjclass[2020]{Primary 13C05; Secondary 13J30, 14P05, 16D80}
\keywords{prime submodule, semiprime submodule, radical of a submodule, real module, real prime submodule, real Nullstellensatz}

\begin{document}

\begin{abstract}
Let $R$ be a commutative ring. A submodule $N$ of an $R$-module $M$ is semiprime if $m\in(N:m)M$ implies $m\in N$. It is known that every semiprime submodule of $R^n$ is an intersection of prime submodules, and Aryapoor extended this to finitely generated modules. We show that no finiteness assumption is needed: every semiprime submodule of an \emph{arbitrary} $R$-module is the intersection of the prime submodules containing it. The proof is short and uses neither coordinates nor presentations; its key step is that a nonzero element $q$ of a semiprime module $Q$ satisfies $sq\notin\p Q$ for all $s\notin\p$, where $\p$ is any minimal prime over the annihilator of $q$. We define an $R$-module $M$ to be real if $\sum_i m_i\otimes m_i=0$ in $M\otimes_RM$ implies $m_i=0$ for all $i$, and a submodule $N\subseteq M$ real if the module $M/N$ is real. We prove the corresponding result for real submodules: every real submodule of an arbitrary module is the intersection of the real prime submodules containing it. 
On $R^n$ our notion of realness coincides with the coordinate definition used in earlier work on real Nullstellens\"atze for free modules.
\end{abstract}

\maketitle

\section{Introduction}

Throughout, $R$ is a commutative ring with $1$ and modules are unital. A submodule $P$ of an $R$-module $M$ is \emph{prime} if $P\neq M$ and $rm\in P$ implies $m\in P$ or $rM\subseteq P$ \cite{AR,Dauns,Lu}. Prime submodules generalize prime ideals, and a basic problem is to describe the intersection of all prime submodules containing a given submodule $N$ (which is $M$ if there is none), the \emph{radical} of $N$ (see \cite{Azizi,MM86,Sarac} and the references there; the convention for the empty intersection is that of \cite{MMS}). In \cite{MMS}, a submodule is even called semiprime if it is an intersection of prime submodules; Theorem~\ref{thmA} below shows that this agrees with Definition~\ref{def:semiprime} for all modules. The obvious generalization of semiprime ideals, $r^2m\in N\Rightarrow rm\in N$ \cite[Definition~1.9(ii)]{Dauns}, does not characterize intersections of prime submodules; see \cite[Theorem~11]{JS}: if $R$ is a unique factorization domain which is not a B\'ezout domain, for instance $R=\Z[x]$ or $R=k[x,y]$, then $R\oplus R$ has a submodule which satisfies this condition but is not an intersection of prime submodules. The right notion for submodules of $R^n$ was introduced in \cite{C22a}, where it was shown that every semiprime submodule of $R^n$ is an intersection of prime submodules; a satisfactory extension to general modules was left open there. Aryapoor \cite{Ary} proposed the following extension and proved the decomposition for finitely generated modules.

\begin{definition}[\cite{C22a,Ary}]\label{def:semiprime}
A submodule $N$ of $M$ is \emph{semiprime} if for every $m\in M$,
\[m\in(N:m)M\ \Longrightarrow\ m\in N,\qquad\text{where }(N:m):=\{r\in R:\ rm\in N\}.\]
\end{definition}

For $M=R^n$ this is the condition ``$f_1f,\dots,f_nf\in N\Rightarrow f\in N$'' of \cite{C22a}, see \cite[Proposition~2.1]{Ary}, and for $M=R$ it is the usual notion of a semiprime (= radical) ideal. Every prime submodule is semiprime, and intersections of semiprime submodules are semiprime (Lemma~\ref{lem:basic}). Our first result is that the converse holds without any finiteness assumption.

\begin{mainthm}\label{thmA}
Let $N$ be a submodule of an arbitrary $R$-module $M$. The following are equivalent:
\begin{enumerate}
\item[(i)] $N$ is semiprime;
\item[(ii)] $N=\bigcap_{\p\in\Spec R}K(N,\p)$, where $K(N,\p):=\{m\in M:\ sm\in N+\p M\text{ for some }s\notin\p\}$;
\item[(iii)] $N$ is an intersection of prime submodules of $M$.
\end{enumerate}
Consequently, the radical of any submodule $N$ is the smallest semiprime submodule of $M$ containing $N$.
\end{mainthm}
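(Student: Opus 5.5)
We prove (iii)$\Rightarrow$(i), (ii)$\Rightarrow$(iii) and (i)$\Rightarrow$(ii). The final sentence then follows: the radical of $N$ is an intersection of prime submodules, hence semiprime by (iii)$\Rightarrow$(i), and it contains $N$. Conversely, any semiprime $N'\supseteq N$ is, by (i)$\Rightarrow$(iii), an intersection of prime submodules, all of which contain $N$; so $N'$ contains the radical.

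For (iii)$\Rightarrow$(i) we use Lemma~\ref{lem:basic}: every prime submodule is semiprime, and intersections of semiprime submodules are semiprime.

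For (ii)$\Rightarrow$(iii), fix $\p$ and show that $K:=K(N,\p)$ is either all of $M$ or a prime submodule. It is a submodule because $\p$ is prime: if $s_1m_1,s_2m_2\in N+\p M$, then $s_1s_2(m_1+m_2)\in N+\p M$ with $s_1s_2\notin\p$. Clearly $\p M\subseteq K$. Suppose $rm\in K$, say $srm\in N+\p M$ with $s\notin\p$.
\begin{itemize}
\item If $r\notin\p$, then $sr\notin\p$, so $m\in K$.
\item If $r\in\p$, then $rM\subseteq\p M\subseteq K$.
\end{itemize}
So whenever $K\ne M$ it is prime. Deleting the terms with $K(N,\p)=M$ does not change the intersection, and if all terms equal $M$ then $N=M$ is the empty intersection, matching the stated convention.

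For (i)$\Rightarrow$(ii), the inclusion $N\subseteq\bigcap_\p K(N,\p)$ is trivial. Suppose $m\notin N$; we must find $\p$ with $m\notin K(N,\p)$. Pass to $Q=M/N$, which is a semiprime module (its zero submodule is semiprime), and let $q$ be the image of $m$, so $q\neq0$. The condition $m\in K(N,\p)$ translates to $sq\in\p Q$ for some $s\notin\p$. So the key step is the one announced in the abstract: if $q\ne0$ in a semiprime module $Q$ and $\p$ is a minimal prime over $I=\Ann(q)$, then $sq\notin\p Q$ for every $s\notin\p$. Such $\p$ exists because $I\ne R$.

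To prove the key step, suppose $sq=\sum_{i=1}^k p_iq_i$ with $p_i\in\p$. We localize the annihilator information at $\p$. Since $\p$ is minimal over $I$, every element of $\p$ is nilpotent modulo $I$ after multiplying by something outside $\p$. Thus for each $i$ there are $t_i\notin\p$ and $n_i$ with $t_ip_i^{n_i}\in I$, i.e.\ $t_ip_i^{n_i}q=0$. Setting $t=\prod t_i$ and $n$ large, we get $tp_i^nq=0$ for all $i$. Now set $J=\Ann(tsq)$. Each $p_i^n$ lies in $J$, and we try to push the relation up to show $tsq\in J\,Q$.
\begin{itemize}
\item Raising $sq=\sum p_iq_i$ to suitable powers is the obstacle, since modules do not allow multiplying elements of $Q$.
\item We would instead iterate. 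Multiplying by $s^{N-1}t$ and using the relation repeatedly, $ts^Nq$ is a combination $\sum_{|\alpha|=N'}p^\alpha q_\alpha$. For $N'\ge k(n-1)+1$, each monomial $p^\alpha$ has some exponent $\ge n$, so $p^\alpha\in\Ann(tq)$.
\item That alone is not enough: we need $p^\alpha$ to annihilate the element $ts^Nq$ itself, which it does since $p^\alpha\in\Ann(tq)\subseteq\Ann(ts^Nq)$.
\end{itemize}
Hence $x:=ts^Nq\in(0:x)Q$, and semiprimeness gives $x=0$. So $ts^N\in I\subseteq\p$, contradicting $t,s\notin\p$.

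The main obstacle is making this iteration precise, namely expressing $s^Nq$ as a combination with high-degree coefficients from $\p$. Replacing $q$ by $sq$ inside each $q_i$ is not available directly. We would first try to handle it by working in the submodule generated by $q,q_1,\dots,q_k$ and the Cayley–Hamilton/determinant trick, or alternatively by a one-step version: show $sq\in(0:sq)Q+\dots$ by induction on $k$.
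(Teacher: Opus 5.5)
Your reduction is sound and matches the paper: (iii)$\Rightarrow$(i), the check that each $K(N,\p)\neq M$ is a prime submodule, the derivation of the final sentence, and the reduction of (i)$\Rightarrow$(ii) to the key step (if $q\neq0$ in a semiprime $Q$ and $\p$ is minimal over $\Ann_R(q)$, then $sq\notin\p Q$ for all $s\notin\p$). The gap is in your proof of that key step. The expansion of $ts^Nq$ as $\sum_{|\alpha|=N'}p^\alpha q_\alpha$ with high-degree monomials is not available. From $sq=\sum_ip_iq_i$ you only get $s^Nq=\sum_ip_i(s^{N-1}q_i)$. The $q_i$ are arbitrary elements of $Q$ with no reason to satisfy $sq_i\in\p Q$, so the relation cannot be fed back in to raise the degree in the $p_i$. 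The Cayley--Hamilton idea fails for the same reason: it would need $sQ'\subseteq\p Q'$ for the submodule $Q'$ generated by $q,q_1,\dots,q_k$. You flag this obstacle yourself, and it is not resolved, so the key step, and hence (i)$\Rightarrow$(ii), is unproved.

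The missing idea is that you never need high powers, because semiprimeness already forces every annihilator in $Q$ to be radical. If $r^2x=0$, then $r\in\Ann_R(rx)$, so $rx=r\cdot x\in\Ann_R(rx)Q$, and hence $rx=0$. So from $t_ip_i^{n_i}\in I$ you get $(t_ip_i)^{n_i}\in I$ and therefore $t_ip_i\in I$; that is, you may take $n=1$. Now put $t=\prod_it_i$ and $u=ts\notin\p$. Then $uq=\sum_i(tp_i)q_i\in IQ\subseteq\Ann_R(uq)Q$, so $uq=0$ by semiprimeness. This gives $u\in I\subseteq\p$, a contradiction. This is exactly the paper's Lemma~\ref{lem:key}. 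It combines Proposition~\ref{rem:dauns}(a), that annihilators in a semiprime module are radical, with Lemma~\ref{lem:minprime}(a), that for a radical $I$ and $\p$ minimal over it each $a\in\p$ admits $t\notin\p$ with $ta\in I$.
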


For $M=R^n$ this is \cite[Theorem~1]{C22a}, whose proof proceeds by induction on $n$, and for finitely generated $M$ it is \cite[Theorem~2.6]{Ary}, deduced from the free case by a presentation. The notation $K(N,\p)$ is taken from \cite[\ensuremath{\mathsection}2]{C22a}, where it denotes the smallest $\p$-prime submodule of $R^n$ containing $N$; the same description holds for arbitrary $M$ (Lemma~\ref{lem:Np}(b)). Note that $K(N,\p)$ is a submodule of $M$, not of a localization of $M$. Our proof avoids both induction and presentations. Theorem~\ref{thmA} answers the question raised in \cite[Remark~2.2]{Ary} whether \cite[Theorem~2.6]{Ary} holds for modules that are not finitely generated; there it was observed, using \cite[Corollary~13]{JS}, that the answer is positive over Dedekind domains. 

The second theme is real algebra. A ring $R$, or an ideal $I$, is \emph{real} if $\sum_ia_i^2=0$ (resp.\ $\sum_ia_i^2\in I$) implies $a_i=0$ (resp.\ $a_i\in I$) for all $i$; a classical theorem states that every real ideal is the intersection of the real prime ideals containing it \cite[Theorem~6.5]{Lam}. Real submodules of free modules $R^n$ were introduced in \cite{C13,C22b} in order to prove real Nullstellens\"atze for matrix polynomials. We propose the following definition for arbitrary modules.

\begin{definition}\label{def:real}
An $R$-module $M$ is \emph{real} if for all $m_1,\dots,m_k\in M$,
\[\textstyle\sum_im_i\otimes m_i=0\ \text{ in }M\otimes_RM\quad\Longrightarrow\quad m_i=0\ \text{ for all }i.\]
A submodule $N\subseteq M$ is \emph{real} if $M/N$ is a real module. A \emph{real prime submodule} is a prime submodule which is real, and $M$ is \emph{semireal} if it has a proper real submodule.
\end{definition}

For $M=R$ this is the notion of a real ideal, since $R/I\otimes_RR/I=R/I$. Real submodules are semiprime (Lemma~\ref{lem:realbasic}), and a semiprime submodule $N$ is real if and only if every $(N:m)$ is a real ideal (Proposition~\ref{prop:realann}).

\begin{mainthm}\label{thmB}
Let $N$ be a submodule of an arbitrary $R$-module $M$. The following are equivalent:
\begin{enumerate}
\item[(i)] $N$ is real;
\item[(ii)] $N=\bigcap\{K(N,\p):\ \p\text{ a real prime ideal of }R\}$;
\item[(iii)] $N$ is an intersection of real prime submodules of $M$.
\end{enumerate}
Consequently, the intersection of the real prime submodules containing a submodule $N$ (which is $M$ if there is none; note that $M$ itself is a real submodule, as $M/M=0$) is the smallest real submodule containing $N$; a module is semireal if and only if it has a real prime submodule; and every submodule maximal among the proper real submodules is prime.
\end{mainthm}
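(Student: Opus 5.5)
We prove the cycle (i)$\Rightarrow$(ii)$\Rightarrow$(iii)$\Rightarrow$(i), using Theorem~\ref{thmA} and the lemmas cited in the introduction: Lemma~\ref{lem:realbasic} (real submodules are semiprime), Proposition~\ref{prop:realann} (a semiprime $N$ is real iff every $(N:m)$ is a real ideal), and Lemma~\ref{lem:Np} (for each prime $\p$, $K(N,\p)$ is the smallest $\p$-prime submodule containing $N$, or equals $M$).

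\textbf{(iii)$\Rightarrow$(i).} First, a real prime submodule $P$ has $(P:M)=\p$ a real ideal: if $\sum a_i^2\in\p$, pick $m\notin P$, so $\sum (a_i m)\otimes(a_i m)=(\sum a_i^2)(m\otimes m)=0$ in $M/P\otimes M/P$ because $\p$ annihilates $M/P$, hence $a_im\in P$, hence $a_i\in\p$ by primeness. Now let $N=\bigcap P_j$ with $P_j$ real prime. Then $N$ is semiprime by Lemma~\ref{lem:basic}, and $(N:m)=\bigcap_j(P_j:m)$. Each $(P_j:m)$ is either $R$ or $(P_j:M)$, a real ideal. Intersections of real ideals are real, so $N$ is real by Proposition~\ref{prop:realann}.

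\textbf{(ii)$\Rightarrow$(iii).} For $\p$ a real prime ideal, $K(N,\p)$ is either $M$, which we drop from the intersection, or a $\p$-prime submodule. It remains to show $M/K(N,\p)$ is real. A $\p$-prime quotient $Q$ is a torsion-free $R/\p$-module: $rq=0$ with $r\notin\p$ forces $q=0$. So $Q$ embeds in $V=Q\otimes_{R/\p}F$, with $F$ the fraction field of $R/\p$, which is a formally real field. Suppose $\sum q_i\otimes q_i=0$ in $Q\otimes_R Q=Q\otimes_{R/\p}Q$. Map this to $V\otimes_F V$ and choose a basis of $V$. The images of the $q_i$ become coordinate vectors $v_i$, and the condition says $\sum_i v_iv_i^T=0$ as a matrix. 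The diagonal entries give sums of squares in $F$ equal to $0$, so $v_i=0$, hence $q_i=0$.

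\textbf{(i)$\Rightarrow$(ii).} This is the main step. Let $N$ be real and $m\notin N$; we need a real prime $\p$ with $m\notin K(N,\p)$. Set $Q=M/N$ and $q=m+N\neq0$. $Q$ is semiprime, and by Proposition~\ref{prop:realann} the ideal $I=\Ann(q)=(N:m)$ is real. A real ideal is the intersection of the real primes containing it \cite[Theorem~6.5]{Lam}, and its minimal primes are real. So take $\p$ minimal over $I$; it is then real.

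By the key step of the proof of Theorem~\ref{thmA}, $sq\notin\p Q$ for all $s\notin\p$. This says exactly that $m\notin K(N,\p)$.

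The obstacle I expect is the fact that minimal primes over a real ideal are real. I would prove it by localizing at $\p$: $I R_\p$ is real, and its radical is $\p R_\p$. Given $\sum a_i^2\in\p$, some power $(\sum a_i^2)^k$ satisfies $t(\sum a_i^2)^k\in I$ for some $t\notin\p$. Expanding via the standard identity, $(\sum a_i^2)^k$ is a sum of squares, and multiplying by $t^2$ keeps a sum of squares in $I$; realness of $I$ then forces each $ta_i^{k}$-type term into $I\subseteq\p$. Alternatively, quote Lam directly: the real radical of a real ideal $I$ is $I$ and equals the intersection of real primes over it, so a minimal prime over $I$ contains some real prime over $I$ and therefore equals it.

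For the consequences of Theorem~\ref{thmB}: the smallest real submodule containing $N$ follows from (i)$\Leftrightarrow$(iii) together with the fact that intersections of real submodules are real, which holds by (iii)$\Rightarrow$(i) applied to the union of the families. A proper real submodule is an intersection of real primes, and the family is nonempty since the submodule is proper; this gives the semireal statement. For a submodule maximal among proper real submodules, the same nonempty family consists of real primes each containing it, so by maximality it equals one of them and is therefore prime.
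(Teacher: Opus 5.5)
Your proof is correct. Your (i)$\Rightarrow$(ii) and (ii)$\Rightarrow$(iii) follow the paper: the Key Lemma applied to a minimal prime over the real ideal $(N:m)$, and a reproof of Lemma~\ref{lem:Kreal} by embedding $M/K(N,\p)$ into a vector space over the formally real field $\operatorname{Frac}(R/\p)$. Your localization proof that minimal primes over a real ideal are real is a valid variant of Lemma~\ref{lem:minprime}. It passes through $t(\sum_ia_i^2)^k\in I$ and expands the power into squares, whereas the paper first uses radicality of $I$ to reduce to $k=1$.

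The genuine difference is (iii)$\Rightarrow$(i). The paper just cites Lemma~\ref{lem:realbasic}(c): $M/N$ embeds into $\prod_jM/P_j$, and submodules and products of real modules are real by functoriality of $\otimes$. You instead do three things: show each $(P_j:M)$ is real, compute $(N:m)=\bigcap_j(P_j:m)$, and invoke Proposition~\ref{prop:realann}(b). This is legitimate, because that proposition is proved before and independently of Theorem~\ref{thmB}. But it sends the easy direction through the hard machinery. The paper's proof of Proposition~\ref{prop:realann}(b) uses the Key Lemma, Lemma~\ref{lem:Kreal}, and Lemma~\ref{lem:realbasic}(c) itself. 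So your route is longer and still rests on the elementary fact that the paper uses directly.

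One caution: the \emph{alternative} you sketch for the reality of minimal primes, quoting Lam, does not work as stated. From $I=\bigcap\{\q\supseteq I:\ \q\text{ real prime}\}$ you cannot conclude that a minimal prime over $I$ contains one of these $\q$. Prime avoidance fails for infinite intersections: in $\Z$ we have $0=\bigcap_p p\Z$, yet the minimal prime $0$ contains no $p\Z$. Keep the localization argument; it is the one that actually proves the claim.
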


For $M=R$, Theorem~\ref{thmB} is the classical representation of real ideals. For $M=R^n$ we show in Section~\ref{sec:comparison} that Definition~\ref{def:real} coincides with the coordinate condition ``$\sum_if_{i,j}f_i\in N$ for all $j$ implies $f_i\in N$'' and with the tensor condition ``$\sum_if_i\otimes f_i\in R^n\otimes N+N\otimes R^n$ implies $f_i\in N$'', variants of which are used in \cite{C13,C22b} (Theorem~\ref{thmC}); thus Theorem~\ref{thmB} contains a decomposition of real submodules of free modules into real prime submodules.

The proofs of both theorems are based on Lemma~\ref{lem:key}: if $Q$ is a semiprime module, $0\neq q\in Q$ and $\p$ is a minimal prime over $\Ann_R(q)$, then $q\notin K(0,\p)$, i.e.\ $sq\notin\p Q$ for every $s\notin\p$. Applied to $Q=M/N$, it shows that every $m\notin N$ lies outside $K(N,\p)$ for a suitable prime $\p$. Also, $K(N,\p)$ is a prime submodule unless it equals $M$ (Lemma~\ref{lem:Np}), and it is real when $\p$ is real (Lemma~\ref{lem:Kreal}).

Section~\ref{sec:prelim} collects preliminaries, Sections~\ref{sec:A} and \ref{sec:B} contain the proofs of Theorems~\ref{thmA} and \ref{thmB} together with examples, Section~\ref{sec:comparison} compares Definition~\ref{def:real} with the definitions of \cite{C13,C22b}, and Section~\ref{sec:remarks} contains remarks and questions. In a subsequent paper \cite{Cnull} we show that over Jacobson rings every prime submodule of a finitely generated module is an intersection of maximal submodules, and we use this, together with Theorems~\ref{thmA} and \ref{thmB}, to prove Hilbert's Nullstellensatz and the real Nullstellensatz for submodules of finitely generated modules and for coherent sheaves.

\section{Preliminaries}\label{sec:prelim}

Let $R$ be a commutative ring with $1$.
An $R$-module $M$ is \emph{prime} if $M\neq0$ and $\Ann_R(m)=\Ann_R(M)$ for every $0\neq m\in M$ (this is equivalent to the definition in \cite{AR}; cf.\ \cite[Theorem~1.3(iii)]{Dauns}), and \emph{semiprime} if $m\in\Ann_R(m)M$ implies $m=0$. For every submodule $N$ of $M$ and every  $m\in M$ with image $\bar m\in M/N$ we have $(N:m)=\Ann_R(\bar m)$. As usual, prime submodules are proper and prime modules are nonzero, in accordance with the convention that prime ideals are proper; semiprime and real submodules, on the other hand, are not required to be proper, and semiprime and real modules are not required to be nonzero.

\begin{lemma}\label{lem:basic}
Let $N$ be a submodule of an $R$-module $M$.
\begin{enumerate}
\item[(a)] $N$ is a prime submodule of $M$ if and only if $M/N$ is a prime module; then $(N:M)=\Ann_R(M/N)$ is a prime ideal.
\item[(b)] $N$ is a semiprime submodule of $M$  if and only if $M/N$ is a semiprime module.
\item[(c)] Every prime submodule is semiprime.
\item[(d)] Every intersection of semiprime submodules is semiprime.
\item[(e)] Submodules of semiprime modules are semiprime, and nonzero submodules of prime modules are prime.
\item[(f)] Direct products, and hence direct sums, of semiprime modules are semiprime.
\end{enumerate}
\end{lemma}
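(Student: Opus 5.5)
The plan is to reduce everything to the quotient module $\bar M=M/N$. Write $\bar m$ for the image of $m$, and use the identity $(N:m)=\Ann_R(\bar m)$ noted above together with the correspondence between submodules of $M$ containing $N$ and submodules of $\bar M$. All six parts are then direct verifications from the definitions. The only step that needs real care is (a), because the ideal-theoretic condition on submodules must be matched with the annihilator condition defining prime modules.

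\textbf{Parts (a) and (b).} For (a), assume first that $N$ is prime. Then $\bar M\ne 0$. Take $0\ne\bar m$ and $r\in\Ann_R(\bar m)$. Since $rm\in N$ and $m\notin N$, primeness gives $rM\subseteq N$, that is, $r\in\Ann_R(\bar M)$. The reverse inclusion $\Ann_R(\bar M)\subseteq\Ann_R(\bar m)$ is trivial. Conversely, assume $\bar M$ is prime and $rm\in N$ with $m\notin N$. Then $r\in\Ann_R(\bar m)=\Ann_R(\bar M)$, so $rM\subseteq N$. Next, $(N:M)=\Ann_R(\bar M)$ is a proper ideal because $\bar M\ne0$. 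To see it is prime, let $ab\bar M=0$ with $a\bar M\ne0$. Pick $\bar m$ with $a\bar m\ne0$; then $b\in\Ann_R(a\bar m)=\Ann_R(\bar M)$.

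For (b), note that $(N:m)M\ni m$ is equivalent to $\bar m\in\Ann_R(\bar m)\bar M$. This holds because $(N:m)M+N$ maps onto $\Ann_R(\bar m)\bar M$ and $N$ maps to $0$.

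\textbf{Parts (c) and (d).} For (c), take $N$ prime and suppose $\bar m\ne 0$ with $\bar m\in\Ann_R(\bar m)\bar M$. By (a), $\Ann_R(\bar m)=\Ann_R(\bar M)$, so $\Ann_R(\bar m)\bar M=0$. This contradicts $\bar m\ne0$.

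For (d), let $N=\bigcap N_j$ with each $N_j$ semiprime, and suppose $m\in(N:m)M$. Since $(N:m)\subseteq(N_j:m)$, we get $m\in(N_j:m)M$ for every $j$. Hence $m\in N_j$ for every $j$, so $m\in N$.

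\textbf{Parts (e) and (f).} For (e), let $L\subseteq Q$ with $Q$ semiprime and $\ell\in\Ann_R(\ell)L$. Then $\ell\in\Ann_R(\ell)Q$, so $\ell=0$. If $Q$ is prime and $L\ne0$, then for every $0\ne\ell\in L$ we have
\[\Ann_R(\ell)=\Ann_R(Q)\subseteq\Ann_R(L)\subseteq\Ann_R(\ell),\]
so $\Ann_R(\ell)=\Ann_R(L)$ and $L$ is prime.

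For (f), let $q=(q_j)\in\Ann_R(q)\prod Q_j$. Write $q=\sum_{k} r_k x_k$ as a finite sum with $r_k\in\Ann_R(q)$. Projecting to the $j$th factor gives $q_j=\sum_k r_k(x_k)_j$. Each $r_k$ lies in $\Ann_R(q)\subseteq\Ann_R(q_j)$, so $q_j\in\Ann_R(q_j)Q_j$ and therefore $q_j=0$. The statement for direct sums then follows from (e), since a direct sum is a submodule of the direct product.
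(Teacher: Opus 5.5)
There is a gap in your proof of (b). Parts (a) and (c)--(f) are correct and follow the paper's argument; your (a) also checks that $(N:M)$ is a prime ideal, which the paper's proof leaves implicit.

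In (b) you claim that, for each fixed $m$, the condition $m\in(N:m)M$ is equivalent to $\bar m\in\Ann_R(\bar m)\bar M$. Your justification only gives $\bar m\in\Ann_R(\bar m)\bar M\iff m\in(N:m)M+N$. The fact that $N$ maps to $0$ does not let you drop the ``$+N$'' for a fixed element, and the pointwise equivalence is in fact false. Take $R=\Z$, $M=\Z^2$, $N=\Z\oplus4\Z$ and $m=(1,2)$. Then $(N:m)=2\Z$ and $m\notin 2\Z^2=(N:m)M$. Yet $\bar m=\overline{(0,2)}=2\cdot\overline{(0,1)}\in\Ann_R(\bar m)\bar M$. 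So the direction ``$N$ semiprime $\Rightarrow$ $M/N$ semiprime'' is not established by your argument. The other direction, which is the one you need in (c), is fine.

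The missing step is to change the representative. Suppose $m=m'+n$ with $m'\in(N:m)M$ and $n\in N$. Then $\bar m'=\bar m$, so $(N:m')=(N:m)$, and therefore $m'\in(N:m')M$. Semiprimeness of $N$ gives $m'\in N$, hence $m\in N$. This is exactly how the paper proves (b): it shows that the conditions ($*$) $m\in I_mM+N\Rightarrow m\in N$ and ($**$) $m\in I_mM\Rightarrow m\in N$ are equivalent, using that $I_m=(N:m)$ depends only on $\bar m$.
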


Part (a) is essentially \cite[Theorem~1.3]{Dauns}, \cite[Theorem~1]{Lu}, \cite[Lemma~1]{JS} and \cite[Lemma~1.1]{MMS}, and parts (b)--(d) are contained in \cite[Propositions~2.2, 2.3 and 2.5]{Ary}; we include the short proofs for completeness.

\begin{proof}
(a) If $N$ is a prime submodule then for every $0 \ne \bar{m} \in M/N$ and every $r \in  \Ann_R(\bar m)$ we have $m \not\in N$ and $rm \in N$ which implies $rM \subseteq N$; i.e. $r \in \Ann_R(M/N)$. The reverse inclusion always holds; so $M/N$ is a prime module. Conversely, if $M/N$ is a prime module then for every $r \in R$ 
and $m \in M$ such that $rm \in N$ we have $r\bar{m}=0$ in $M/N$ which implies that either $\bar{m}=0$ 
or $r \in  \Ann_R(\bar m)=\Ann_R(M/N)$; i.e. either $m \in N$ or $rM \subseteq N$.

(b) For $m\in M$ put $I_m:=(N:m)=\Ann_R(\bar m)$; it depends only on $\bar m$. Since $I_m(M/N)$ is the image of $I_m M$ in $M/N$, we have
$\bar m\in I_m(M/N)$ if and only if $m\in I_mM+N.$
Hence $M/N$ is a semiprime module if and only if ($*$) $m\in I_mM+N$ implies $m\in N$, whereas $N$ is a semiprime submodule if and only if ($**$) $m\in I_mM$ implies $m\in N$. Clearly ($*$) implies ($**$). Conversely, assume ($**$) and let $m=m'+n$ with $m'\in I_mM$ and $n\in N$. Then $\bar m'=\bar m$, so $I_{m'}=I_m$ and $m'\in I_{m'}M$; by ($**$), $m'\in N$, hence $m\in N$.

(c) If $M$ is prime and $q\neq0$, then $\Ann_R(q)M=\Ann_R(M)M=0\not\ni q$; with (a) and (b) this shows that prime submodules are semiprime.

(d) If $N=\bigcap_iN_i$, then $(N:m)\subseteq(N_i:m)$, so $m\in(N:m)M$ implies $m\in N_i$ for every $i$.

(e) Let $L\subseteq M$. If $M$ is semiprime and $q\in L$ satisfies $q\in\Ann_R(q)L$, then $q\in\Ann_R(q)M$, so $q=0$. If $M$ is prime and $L\neq0$, then for $0\neq m\in L$ we have $\Ann_R(M)\subseteq\Ann_R(L)\subseteq\Ann_R(m)=\Ann_R(M)$, so $\Ann_R(m)=\Ann_R(L)$ for all $0\neq m\in L$.

(f) Let $M=\prod_\alpha M_\alpha$ with all $M_\alpha$ semiprime, and let $q=\sum_jb_jy_j$ with $b_j\in\Ann_R(q)$ and $y_j\in M$. For each $\alpha$, the $\alpha$-component $q_\alpha=\sum_jb_j(y_j)_\alpha$ lies in $\Ann_R(q)M_\alpha\subseteq\Ann_R(q_\alpha)M_\alpha$, so $q_\alpha=0$; hence $q=0$. Direct sums are submodules of direct products; apply (e).
\end{proof}

Proposition \ref{rem:dauns} compares Definition~\ref{def:semiprime} with Dauns' condition \cite[Definition~1.9(ii)]{Dauns} that all ideals $(N:m)$ are radical. Part (a) is \cite[Proposition~2.4]{Ary} and is used in the proof of Theorem~\ref{thmA}. Part (b) is not used there; although it follows from \cite[Corollary~13, (i)$\Rightarrow$(iii)]{JS}, we provide a direct proof to show how \cite[Corollary~13, (i)$\Rightarrow$(iii)]{JS} can be recovered from Theorem~\ref{thmA}.

\begin{proposition}[Dauns' condition]\label{rem:dauns}
Let $N$ be a submodule of an $R$-module $M$.
\begin{enumerate}
\item[(a)] If $N$ is semiprime, then $(N:m)$ is a radical ideal for every $m\in M$. In particular, if $M$ is semiprime, then $\Ann_R(q)$ is radical for every $q\in M$.
\item[(b)] If $R$ is a Dedekind domain and $(N:m)$ is radical for every $m\in M$, then $N$ is semiprime.
\end{enumerate}
\end{proposition}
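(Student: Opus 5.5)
For (a) I would pass to the quotient. By Lemma~\ref{lem:basic}(b), $Q:=M/N$ is a semiprime module, and $(N:m)=\Ann_R(\bar m)$. So it suffices to show that in a semiprime module $Q$ every annihilator $\Ann_R(q)$ is radical; this is also the ``in particular'' clause. Let $r^k q=0$ with $k$ minimal and suppose $k\ge 2$. Put $q':=r^{k-1}q$. Then $rq'=r^kq=0$, so $r\in\Ann_R(q')$. Moreover $q'=r\cdot(r^{k-2}q)\in\Ann_R(q')Q$. Semiprimeness gives $q'=0$, which contradicts the minimality of $k$. Hence $rq=0$, and $\Ann_R(q)$ is radical.

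For (b), again set $Q=M/N$. The hypothesis says that $\Ann_R(q)$ is radical for every $q\in Q$, and we must show that $q\in\Ann_R(q)Q$ forces $q=0$. Let $I=\Ann_R(q)$ and suppose $q\neq0$, so $I\ne R$.

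First take the case $I=0$. Then $q\in 0\cdot Q=0$, a contradiction.

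Otherwise $I$ is a nonzero proper radical ideal of the Dedekind domain $R$, so $I=\p_1\cdots\p_t=\p_1\cap\dots\cap\p_t$ with distinct maximal ideals $\p_i$. I would localize at some $\p=\p_i$. Write $q=\sum_j a_jy_j$ with $a_j\in I$. In $Q_\p$ we have $I_\p=\p R_\p=\pi R_\p$, where $R_\p$ is a DVR with uniformizer $\pi$. This gives $q/1=\pi y$ for some $y\in Q_\p$. Also $\pi q/1=0$, since $I_\p$ annihilates $q/1$, so $\pi^2y=0$.

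The key point is that the radical hypothesis localizes. For $y=z/s$ we have $\Ann_{R_\p}(y)=\Ann_R(z)_\p$, and the localization of a radical ideal is radical. Since $\pi^2\in\Ann_{R_\p}(y)$, we get $\pi y=0$, that is, $q/1=0$ in $Q_\p$.

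This holds for every $\p\supseteq I$. For a maximal ideal $\p\not\supseteq I$, $q/1=0$ in $Q_\p$ automatically. So $q=0$, a contradiction.

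The main obstacle I expect is making the localization step rigorous. One part is checking that $\Ann_{R_\p}(z/s)=(\Ann_R z)_\p$ remains radical. The other is clearing denominators, so that the element $y$ written over $R_\p$ genuinely comes from $Q$ with the relation $\pi^2 y=0$ holding in $Q_\p$. Both are standard, since $z$ is finitely generated and $R_\p$ is flat over $R$.
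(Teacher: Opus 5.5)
Your proof is correct and follows the paper's argument. In (a) you use the same trick: an element killed by $r$ that is also an $r$-multiple lies in $\Ann_R(\cdot)Q$; the paper applies it to $rq$ with $r^2q=0$, you apply it to $r^{k-1}q$. In (b) you localize at a maximal $\p\supseteq I$ with $IR_\p=\pi R_\p$, write $q/1=\pi y$ with $\pi^2y=0$, and use radicality of $\Ann_R(z)$ transported to $R_\p$ to force $q/1=0$. The only differences are cosmetic: you obtain $IR_\p=\pi R_\p$ from the factorization of a radical ideal into distinct maximal ideals (the paper uses $IR_\p=(\pi^k)$ radical, hence $k=1$), and you finish via the local--global principle over all maximal ideals, whereas the paper gets an immediate contradiction because $q/1\neq0$ in $Q_\p$ whenever $\Ann_R(q)\subseteq\p$.
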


\begin{proof}
By Lemma~\ref{lem:basic}(b), $(N:m)=\Ann_R(\bar m)$, and $N$ is semiprime if and only if $Q:=M/N$ is.

(a) If $r^2q=0$, then $r\in\Ann_R(rq)$, so $rq\in\Ann_R(rq)Q$, implying $rq=0$. Thus $\Ann_R(q)$ is radical. 

(b) Suppose that $0\neq q\in IQ$ with $I:=\Ann_R(q)$. Then $I\neq0$, for otherwise $IQ=0$. Let $\p\supseteq I$ be a maximal ideal. Then $q/1\neq0$ in $Q_\p$, since $sq=0$ would give $s\in I\subseteq\p$. Since $\p\neq0$, the local ring $R_\p$ is a discrete valuation ring; let $\pi$ be a uniformizer. The ideal $IR_\p$ is nonzero and contained in $\p R_\p$, so $IR_\p=(\pi^k)$ for some $k\ge1$; moreover $IR_\p$ is radical, because $I$ is radical and localization preserves radical ideals, whence $IR_\p=(\pi)$. Writing $q=\sum_ja_jy_j$ with $a_j\in I$ and $a_j/1=\pi c_j$, we get $q/1=\pi x$ with $x:=\sum_jc_jy_j/1$, and $\pi^2x=\pi\cdot q/1=0$ because $\pi\in IR_\p$. Write $x=y/s$ and $\pi=c/w$ with $s,w\notin\p$. Then $\pi^2x=c^2y/(w^2s)=0$ in $Q_\p$, so $uc^2y=0$ for some $u\notin\p$, and hence $(uc)^2y=u\cdot uc^2y=0$. Since $\Ann_R(y)$ is radical, $ucy=0$, and therefore $q/1=\pi x=cy/(ws)=ucy/(uws)=0$, a contradiction.
\end{proof}

Over $k[x,y]$ the two notions differ \cite[Theorem~11]{JS}.

For $R=\Z$, Proposition~\ref{rem:dauns} shows that an abelian group is semiprime if and only if every torsion element has squarefree order, since $n\Z$ is radical if and only if $n$ is $0$, $\pm1$, or squarefree. For instance, $\Z/6$ and $\Z\oplus\Z/2$ are semiprime, while $\Z/p^k$ with $k\ge2$ is not.

\begin{lemma}\label{lem:Np}
For every submodule $N$ of an $R$-module $M$ and every prime ideal $\p$ of $R$, the set
\[K(N,\p)=\{m\in M:\ sm\in N+\p M\text{ for some }s\notin\p\}\]
is a submodule of $M$ containing $N+\p M$. Moreover:
\begin{enumerate}
\item[(a)] Either $K(N,\p)=M$, or $K(N,\p)$ is a prime submodule with $(K(N,\p):M)=\p$.
\item[(b)] If $P\supseteq N$ is a prime submodule with $(P:M)=\p$, then $K(N,\p)\subseteq P$. Hence $K(N,\p)$ is the smallest prime submodule $P\supseteq N$ with $(P:M)=\p$ if such a submodule exists, and $K(N,\p)=M$ otherwise.
\end{enumerate}
\end{lemma}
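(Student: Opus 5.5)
First I check that $K:=K(N,\p)$ is a submodule containing $N+\p M$. Containment is immediate: take $s=1\notin\p$. For closure, suppose $s_1m_1,s_2m_2\in N+\p M$ with $s_i\notin\p$. Then $s_1s_2(m_1+m_2)\in N+\p M$, and $s_1s_2\notin\p$ because $\p$ is prime. Closure under scalars $r$ is clear, since $s(rm)=r(sm)$.

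For (a), I assume $K\neq M$ and verify the prime condition directly. Let $rm\in K$, say $s\,rm\in N+\p M$ with $s\notin\p$. There are two cases. If $r\notin\p$, then $sr\notin\p$, so $m\in K$. If $r\in\p$, then $rM\subseteq\p M\subseteq K$. So $K$ is prime. For the colon ideal, $\p\subseteq(K:M)$ since $\p M\subseteq K$. Conversely, suppose $r\notin\p$ and $rM\subseteq K$. Then for every $m$ we have $rm\in K$, so by the first case $m\in K$, giving $K=M$, a contradiction. Hence $(K:M)=\p$. This step also gives the useful fact that $K$ is "$\p$-saturated": if $sm\in K$ with $s\notin\p$, then $m\in K$.

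For (b), let $P\supseteq N$ be prime with $(P:M)=\p$. Then $\p M\subseteq P$, so $N+\p M\subseteq P$. Now take $m\in K$, so $sm\in N+\p M\subseteq P$ with $s\notin\p$. Primeness of $P$ gives $m\in P$ or $sM\subseteq P$. The latter would mean $s\in(P:M)=\p$, which is impossible. So $m\in P$, and $K\subseteq P$.

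The "hence" clause follows by combining (a) and (b). If some prime $P\supseteq N$ with $(P:M)=\p$ exists, then $K\subseteq P\neq M$. So by (a), $K$ is itself such a prime, and it is contained in every other one, making it the smallest. If no such prime exists, then $K$ cannot be a proper submodule, since by (a) it would then be such a prime. Hence $K=M$.

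There is no real obstacle here. The only point needing care is the case split on whether $r\in\p$ in (a), and remembering that primeness of $\p$ is what makes the set of allowed multipliers $s\notin\p$ closed under products.
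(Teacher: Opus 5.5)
Your proof is correct and follows the paper's argument essentially step for step. It uses the same multiplicative-closure check for the submodule property, the same split on whether $r\in\p$ (the $\p$-saturation of $K$) for primeness and $(K:M)=\p$, and the same use of $s\notin(P:M)=\p$ in (b); your derivation of the ``hence'' clause just spells out what the paper leaves as ``follows from (a).''
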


\begin{proof}
Write $K:=K(N,\p)$. Taking $s=1$ shows $N+\p M\subseteq K$. If $m,m'\in K$, then $sm,s'm'\in N+\p M$ with $s,s'\notin\p$, so $ss'\notin\p$ and $ss'(am+bm')\in N+\p M$ for all $a,b\in R$; i.e.\ $am+bm'\in K$. Thus $K$ is a submodule.

(a) First, if $r\notin\p$ and $rm\in K$, say $srm\in N+\p M$ with $s\notin\p$, then $sr\notin\p$, so $m\in K$. Now let $K\neq M$. If $rm\in K$ and $m\notin K$, then $r\in\p$ by what we just saw, so $rM\subseteq\p M\subseteq K$; thus $K$ is prime. Finally $\p\subseteq(K:M)$ as $\p M\subseteq K$, and no $r\notin\p$ lies in $(K:M)$, since $rM\subseteq K$ would give $M\subseteq K$.

(b) If $m\in K(N,\p)$, then $sm\in N+\p M\subseteq P$ for some $s\notin\p$, since $\p M=(P:M)M\subseteq P$. As $P$ is prime and $s\notin(P:M)$, we get $m\in P$. The last statement follows from (a).
\end{proof}

Every proper ideal $I$ has a minimal prime over it, by Zorn's lemma applied to the primes containing $I$ ordered by reverse inclusion (an intersection of a chain of primes is prime).

\begin{lemma}\label{lem:minprime}
Let $I$ be a radical ideal and $\p$ a minimal prime over $I$.
\begin{enumerate}
\item[(a)] For every $a\in\p$ there is $t\notin\p$ with $ta\in I$.
\item[(b)] If $I$ is real, then $\p$ is real.
\end{enumerate}
\end{lemma}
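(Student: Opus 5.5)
For part (a), I plan to localize at $\p$. Since $\p$ is minimal over $I$, the ring $R_\p$ has exactly one prime containing $IR_\p$, namely $\p R_\p$. Hence the radical of $IR_\p$ is $\p R_\p$. Because $I$ is radical and localization preserves radical ideals, $IR_\p$ is itself radical, so $IR_\p=\p R_\p$. Now take $a\in\p$. Then $a/1\in IR_\p$, so $a/1=b/u$ with $b\in I$ and $u\notin\p$, and therefore $v(ua-b)=0$ for some $v\notin\p$. Setting $t:=uv\notin\p$, we get $ta=vb\in I$. The only point that needs a brief justification is that localization preserves radicality. If $(x/s)^n\in IR_\p$, then $wx^n\in I$ for some $w\notin\p$, so $(wx)^n\in I$, hence $wx\in I$ and $x/s\in IR_\p$. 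Alternatively, I can avoid localization entirely: the set $S=\{t a^n: t\notin\p,\ n\ge0\}$ is multiplicative, and if it were disjoint from $I$, a prime containing $I$ and disjoint from $S$ would sit strictly inside $\p$ (it omits $a$), contradicting minimality. So $ta^n\in I$ for some $t$ and $n$, hence $(ta)^n\in I$, and radicality gives $ta\in I$. I would present this second version, since it is shorter.

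For part (b), suppose $\sum_i a_i^2\in\p$. By (a) there is $t\notin\p$ with $t\sum_i a_i^2\in I$. Then $t^2\sum_i a_i^2=\sum_i (ta_i)^2\in I$, using that $I$ is an ideal. Since $I$ is real, each $ta_i\in I\subseteq\p$. Because $t\notin\p$ and $\p$ is prime, we conclude $a_i\in\p$, so $\p$ is real.

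I expect no real obstacle here. The only step needing care is the minimality argument in (a), namely producing a prime between $I$ and $\p$ that avoids $a$. This is the standard existence of a prime disjoint from a multiplicative set and containing an ideal disjoint from it.
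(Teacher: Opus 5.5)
Your proof is correct. Your first (localization) version is essentially the paper's argument. The paper works in $A_P=(R/I)_\p$, observes that it has a single prime and is reduced, and concludes that it is a field. That is the same as your $IR_\p=\p R_\p$.

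The version you intend to present takes a different, localization-free route. Instead of using ``nilradical $=$ intersection of primes'' inside the local ring, you apply the existence of primes avoiding a multiplicative set directly to $S=\{ta^n:\ t\notin\p,\ n\ge0\}$. The elements with $n=0$ force the resulting prime into $\p$, and the element $a$ makes the inclusion strict.

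Both routes rest on the same Zorn's-lemma fact, but they emphasize different things:
\begin{itemize}
\item Yours separates the roles of the hypotheses. Minimality alone, for an arbitrary ideal $I$, gives $ta^n\in I$ for some $t\notin\p$ and $n\ge1$. Radicality is used only to drop the exponent.
\item The paper's route yields the more structural statement that $(R/I)_\p$ is a field.
\end{itemize}

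One small detail to add when you write it up: $n=0$ cannot occur, since $t\in I\subseteq\p$ is impossible. So $n\ge1$ and $(ta)^n=t^{n-1}\cdot ta^n\in I$ is legitimate.

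Part (b) is identical to the paper's proof.
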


\begin{proof}
(a) Put $A:=R/I$ and $P:=\p/I$. The primes of $A_P=(R/I)_\p$ correspond to the primes $\q$ of $R$ with $I\subseteq\q\subseteq\p$, so by minimality $A_P$ has exactly one prime, its maximal ideal $PA_P$. Write $\bar a$ for the image of $a\in R$ in $A$; the elements of $A_P$ are the fractions $\bar a/\bar s$ with $a\in R$ and $s\notin\p$. Since $I$ is radical, $A$ is reduced, and so is $A_P$: if $(\bar a/\bar s)^n=0$, then $\bar u\bar a^n=0$ for some $u\notin\p$, so $(\bar u\bar a)^n=0$, hence $\bar u\bar a=0$ and $\bar a/\bar s=0$. The nilradical of $A_P$ is the intersection of all primes of $A_P$, which is $PA_P$; since $A_P$ is reduced, the nilradical is zero, so $PA_P=0$. As $PA_P$ is the maximal ideal of $A_P$, it follows that $A_P$ is a field. For $a\in\p$ we get $\bar a/\bar1\in PA_P=0$, i.e.\ $\bar t\bar a=0$ for some $t\notin\p$, which means $ta\in I$.

(b) If $\sum_ia_i^2\in\p$, then by (a) there is $t\notin\p$ with $t\sum_ia_i^2\in I$, so $\sum_i(ta_i)^2\in I$, whence $ta_i\in I\subseteq\p$ and $a_i\in\p$.
\end{proof}

Both hypotheses in (a) are needed: for $R=k[x]$ and $\p=(x)$, neither $I=(x^2)$ (not radical) nor $I=0$ (for which $\p$ is not minimal) admits $t\notin\p$ with $tx\in I$. Part (b) is \cite[Lemma~2.9]{Lam}, applied to the real ring $R/I$.

\section{Proof of Theorem~\ref{thmA} and examples}\label{sec:A}

The proof of Theorem~\ref{thmA} rests on a single lemma. For a semiprime module $Q$ and $0\neq q\in Q$, it produces a prime ideal $\p$, namely any minimal prime over $\Ann_R(q)$, with $q\notin K(0,\p)$. Applied to $Q=M/N$, it separates every $m\notin N$ from $N$ by a submodule $K(N,\p)$, which is prime by Lemma~\ref{lem:Np}. After the proof we illustrate Theorem~\ref{thmA} by two examples.

\begin{lemma}[Key lemma]\label{lem:key}
Let $Q$ be a semiprime $R$-module, $0\neq q\in Q$, and $\p$ a minimal prime over $I:=\Ann_R(q)$. Then $sq\notin\p Q$ for every $s\notin\p$, i.e.\ $q\notin K(0,\p)$. 
\end{lemma}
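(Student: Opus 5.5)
The idea is to argue by contradiction. We use Lemma~\ref{lem:minprime}(a) to push the coefficients of an expression of $sq$ from $\p$ into $I$, after multiplying by a suitable element outside $\p$. Then semiprimeness of $Q$ forces a multiple of $q$ to vanish that cannot vanish.

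First the setup. Since $q\neq0$, the ideal $I=\Ann_R(q)$ is proper, so minimal primes over it exist. By Proposition~\ref{rem:dauns}(a), applied with $N=0$ (so $Q$ semiprime), $I$ is a radical ideal. Hence Lemma~\ref{lem:minprime}(a) applies to $I$ and $\p$.

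Now suppose, for contradiction, that $sq\in\p Q$ for some $s\notin\p$.
\begin{enumerate}
\item[(1)] Write $sq=\sum_{j=1}^k a_jy_j$ with $a_j\in\p$ and $y_j\in Q$.
\item[(2)] By Lemma~\ref{lem:minprime}(a), choose $t_j\notin\p$ with $t_ja_j\in I$ for each $j$. Put $t:=t_1\cdots t_k$. Then $t\notin\p$ because $\p$ is prime, and $ta_j\in I$ for every $j$.
\item[(3)] Set $q':=tsq$. Then $q'=\sum_j(ta_j)y_j$, and each coefficient satisfies
\[ta_j\in I=\Ann_R(q)\subseteq\Ann_R(tsq)=\Ann_R(q').\]
So $q'\in\Ann_R(q')Q$.
\item[(4)] Since $Q$ is semiprime, $q'=0$. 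Thus $ts\in\Ann_R(q)=I\subseteq\p$.
\end{enumerate}
Step (4) contradicts $t\notin\p$ and $s\notin\p$, since $\p$ is prime. Hence $sq\notin\p Q$ for every $s\notin\p$. By the definition of $K(0,\p)$, this says exactly that $q\notin K(0,\p)$.

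The only real obstacle is step (3). Semiprimeness can only be applied to an element written with coefficients in its \emph{own} annihilator. The coefficients $a_j$ lie only in $\p$, not in $\Ann_R(sq)$. Multiplying by $t$ repairs this: it moves the coefficients into $I$, and $I$ is contained in the annihilator of every multiple of $q$. This is precisely where minimality of $\p$ and radicality of $I$ enter, both through Lemma~\ref{lem:minprime}(a).
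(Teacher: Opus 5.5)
Your proof is correct and follows essentially the same route as the paper: multiply by $t=\prod_j t_j$ (from Lemma~\ref{lem:minprime}(a)) to move the coefficients into $I\subseteq\Ann_R(tsq)$. Semiprimeness then gives $tsq=0$, hence $ts\in I\subseteq\p$, a contradiction.
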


\begin{proof}
$I$ is proper, and radical by Proposition~\ref{rem:dauns}(a). Suppose that $sq=\sum_ja_jy_j$ for some $s\notin\p$, $a_j\in\p$ and $y_j\in Q$. By Lemma~\ref{lem:minprime}(a) there are $t_j\notin\p$ with $t_ja_j\in I$; put $t:=\prod_jt_j$ and $u:=ts\notin\p$. Then
\[uq=\textstyle\sum_j(ta_j)y_j\in I\cdot Q\subseteq\Ann_R(uq)\cdot Q,\]
so $uq=0$ because $Q$ is semiprime. Thus $u\in I\subseteq\p$, a contradiction.
\end{proof}

\begin{proof}[Proof of Theorem~\ref{thmA}]
(i)$\Rightarrow$(ii): $N\subseteq K(N,\p)$ for all $\p$. Let $m\notin N$. Then $Q:=M/N$ is semiprime (Lemma~\ref{lem:basic}(b)) and $\bar m\neq0$. For a minimal prime $\p$ over $\Ann_R(\bar m)$, Lemma~\ref{lem:key} gives $s\bar m\notin\p Q$, i.e.\ $sm\notin N+\p M$, for every $s\notin\p$; that is, $m\notin K(N,\p)$.
(ii)$\Rightarrow$(iii): by Lemma~\ref{lem:Np}(a), each $K(N,\p)\neq M$ is prime, and the $K(N,\p)=M$ may be omitted.
(iii)$\Rightarrow$(i): Lemma~\ref{lem:basic}(c),(d).
The last statement follows because the semiprime submodules containing $N$ are closed under intersections, and each of them is an intersection of prime submodules containing $N$.
\end{proof}

The proof shows slightly more: in (ii) it suffices to take the primes $\p$ that are minimal over $(N:m)$ for some $m\notin N$.

\begin{example}[A decomposition]\label{ex:decomp}
Let $R=k[x,y]$, $M=R/(x)\oplus R/(y)$ and $N=0$. As a direct sum of semiprime modules, $M$ is semiprime (Lemma~\ref{lem:basic}(f)), but it is not prime, as $\Ann_R(\bar1,0)=(x)\neq(xy)=\Ann_R(M)$. For $\p=(x)$ we have $\p M=xM=0\oplus(x,y)/(y)$, and we claim $K(0,\p)=0\oplus R/(y)$. For $\supseteq$: if $v\in R/(y)$, then $s:=y\notin\p$ satisfies $s(0,v)=(0,yv)=0\in\p M$. For $\subseteq$: if $s(u,v)\in\p M$ with $s\notin\p$, then $su=0$ in $R/(x)\cong k[y]$, which is a domain in which $s\neq0$; hence $u=0$. Symmetrically $K(0,(y))=R/(x)\oplus0$, and $0=K(0,(x))\cap K(0,(y))$ is the decomposition of Theorem~\ref{thmA}.
\end{example}

\begin{example}[A decomposition in a free module of infinite rank]\label{ex:freeinf}
Let $R=k[x,y]$, $\mathfrak m=(x,y)$, $M=R^{(\mathbb N)}$ with basis $e_0,e_1,\dots$, and
\[N:=\{f\in M:\ \textstyle\sum_nf_n\in(x)\ \text{and}\ f_n\in\mathfrak m\ \text{for all }n\}.\]
Let $V:=k^{(\mathbb N_{\ge1})}$, on which $R$ acts through $R/\mathfrak m$. The map\[\varphi:M\to k[y]\oplus V,\qquad f\mapsto\bigl((\textstyle\sum_nf_n)(0,y),\ (f_n(0,0))_{n\ge1}\bigr),\]is $R$-linear and surjective with kernel $N$ (the two conditions also force $f_0\in\mathfrak m$), so $M/N\cong k[y]\oplus V$. This quotient is semiprime by Lemma~\ref{lem:basic}(f), since $k[y]\cong R/(x)$ and $V\cong(R/\mathfrak m)^{(\mathbb N_{\ge1})}$ are prime, hence semiprime, modules. It is not prime, since $\Ann_R(1,0)=(x)\neq\mathfrak m=\Ann_R(0,v)$ for $0\neq v\in V$. The minimal primes over the annihilators are $(x)$ and $\mathfrak m$, and
\[K(N,(x))=\{f:\ \textstyle\sum_nf_n\in(x)\},\qquad K(N,\mathfrak m)=\mathfrak mM:\]
for the first, note $N+xM=N$; if $\sum_nf_n\in(x)$, then $yf\in N$, since $\sum_nyf_n=y\sum_nf_n\in(x)$ and each $yf_n\in\mathfrak m$, and conversely $sf\in N$ with $s\notin(x)$ gives $s\sum_nf_n\in(x)$, hence $\sum_nf_n\in(x)$; for the second, note $N+\mathfrak mM=\mathfrak mM$, and every $s\notin\mathfrak m$ acts invertibly on $M/\mathfrak mM$. Hence $N=K(N,(x))\cap K(N,\mathfrak m)$ is the decomposition of Theorem~\ref{thmA}. Note that the quotient $M/N\cong k[y]\oplus V$ happens to be a direct sum of cyclic modules. Here $R$ is not a Dedekind domain, and $M$ and $N$ are not finitely generated, so none of \cite[Theorem~1]{C22a}, \cite[Corollary~13]{JS} and \cite[Theorem~2.6]{Ary} applies.
\end{example}

\section{Proof of Theorem~\ref{thmB} and examples}\label{sec:B}

We defined real modules and real submodules in Definition~\ref{def:real}. We first summarize which operations on modules and submodules preserve realness (Lemma~\ref{lem:realbasic}) and show that the submodules $K(N,\p)$ are real when $\p$ is a real prime ideal (Lemma~\ref{lem:Kreal}). We then compare realness of the submodule $N$ with realness of the ideals $(N:m)$, in analogy with Proposition~\ref{rem:dauns}, and prove Theorem~\ref{thmB} by the method of Section~\ref{sec:A}. The section ends with examples.

\begin{lemma}\label{lem:realbasic}
\begin{enumerate}
\item[(a)] Submodules of real modules are real.
\item[(b)] Direct products, and hence direct sums, of real modules are real.
\item[(c)] Every intersection of real submodules is real.
\item[(d)] Every real module is semiprime; hence every real submodule is semiprime.
\end{enumerate}
\end{lemma}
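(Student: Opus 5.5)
The four parts all follow from the functoriality of $M\otimes_RM$ in $M$, together with the fact that realness only asks us to \emph{detect} vanishing of $\sum_i m_i\otimes m_i$. I will prove them in the order (a), (b), (c), (d).

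For (a), let $L\subseteq M$ with $M$ real, and suppose $\sum_i m_i\otimes m_i=0$ in $L\otimes_RL$ with $m_i\in L$. The inclusion $\iota:L\to M$ induces an $R$-linear map $\iota\otimes\iota:L\otimes_RL\to M\otimes_RM$. This map need not be injective, but that does not matter: the direction we need is only that it sends $\sum_i m_i\otimes m_i$ to $\sum_i m_i\otimes m_i\in M\otimes_RM$, which is therefore $0$. Realness of $M$ then gives $m_i=0$.

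For (b), let $M=\prod_\alpha M_\alpha$ with all $M_\alpha$ real, and suppose $\sum_i m_i\otimes m_i=0$ in $M\otimes_RM$. For each $\alpha$ apply $\pi_\alpha\otimes\pi_\alpha$, where $\pi_\alpha$ is the projection onto $M_\alpha$. This gives $\sum_i(m_i)_\alpha\otimes(m_i)_\alpha=0$ in $M_\alpha\otimes_RM_\alpha$. Hence $(m_i)_\alpha=0$ for all $\alpha$, so $m_i=0$. Direct sums are submodules of direct products, so they are covered by (a).

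For (c), let $N=\bigcap_jN_j$ with each $N_j$ real in $M$. The canonical map $M/N\to\prod_jM/N_j$ is injective, so $M/N$ is isomorphic to a submodule of a product of real modules. It is therefore real by (b) and (a). The empty intersection is $M$, for which $M/M=0$ is trivially real.

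For (d), let $M$ be real and suppose $q\in\Ann_R(q)M$, say $q=\sum_ja_jy_j$ with $a_jq=0$. Then
\[q\otimes q=\textstyle\sum_j a_jy_j\otimes q=\sum_j y_j\otimes a_jq=0,\]
so realness with $k=1$ gives $q=0$; thus $M$ is semiprime. For a real submodule $N$, the module $M/N$ is real, hence semiprime, and so $N$ is semiprime by Lemma~\ref{lem:basic}(b). There is no real obstacle here. The only point needing care is in (a), where one must notice that the lack of left exactness of the tensor product is harmless, because only the image of the element in $M\otimes_RM$ matters.
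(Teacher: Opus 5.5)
Your proposal is correct and follows the paper's proof essentially step for step: functoriality of $M\otimes_RM$ under the inclusion for (a) and under the projections for (b), the embedding $M/N\hookrightarrow\prod_jM/N_j$ for (c), and $q\otimes q=\sum_jy_j\otimes a_jq=0$ followed by Lemma~\ref{lem:basic}(b) for (d). Your remarks on the non-injectivity of $\iota\otimes\iota$ and on the empty intersection are harmless extras that the paper leaves implicit.
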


\begin{proof}
(a) Let $L$ be a submodule of a real module $M$, with inclusion $j:L\to M$. If $\sum_ix_i\otimes x_i=0$ in $L\otimes L$, applying $j\otimes j$ gives $\sum_ix_i\otimes x_i=0$ in $M\otimes M$, so $x_i=0$ for all $i$.

(b) Let $M=\prod_\alpha M_\alpha$ with all $M_\alpha$ real, and let $\pi_\alpha:M\to M_\alpha$ be the projections. If $\sum_ix_i\otimes x_i=0$ in $M\otimes M$, then applying $\pi_\alpha\otimes\pi_\alpha$ gives $\sum_i\pi_\alpha(x_i)\otimes\pi_\alpha(x_i)=0$ in $M_\alpha\otimes M_\alpha$, so $\pi_\alpha(x_i)=0$ for all $\alpha$, i.e.\ $x_i=0$. Direct sums are submodules of direct products; apply (a).

(c) If $N=\bigcap_\alpha N_\alpha$ with all $N_\alpha$ real, the canonical maps $M/N\to M/N_\alpha$ induce an embedding $M/N\hookrightarrow\prod_\alpha M/N_\alpha$; apply (b) and (a).

(d) If $q\in\Ann_R(q)M$, say $q=\sum_jb_jy_j$ with $b_jq=0$, then $q\otimes q=\sum_jy_j\otimes b_jq=0$, so $q=0$. Apply Lemma~\ref{lem:basic}(b).
\end{proof}

\begin{lemma}\label{lem:Kreal}
Let $\p$ be a real prime ideal and let $N$ be a submodule of an $R$-module $M$. Then $K(N,\p)$ is a real submodule of $M$; hence, if $K(N,\p)\neq M$, it is a real prime submodule with $(K(N,\p):M)=\p$.
\end{lemma}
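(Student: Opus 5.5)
We need to show that $Q:=M/K(N,\p)$ is a real module. Put $K:=K(N,\p)$. The main structural fact is that $Q$ is a module over $R/\p$ (since $\p M\subseteq K$). It is also torsion-free over $R/\p$: if $s\notin\p$ and $s\bar m=0$, then $sm\in K$, and by the first step of the proof of Lemma~\ref{lem:Np}(a) this gives $m\in K$. So the plan is to reduce to a torsion-free module over the real domain $D:=R/\p$, embed it into a vector space over the real field $F:=\operatorname{Frac}(D)$, and use realness there.

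The steps are as follows.

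First, $Q\otimes_RQ=Q\otimes_DQ$, because $\p$ acts as zero on both factors. Hence it suffices to show: if $\sum_i q_i\otimes q_i=0$ in $Q\otimes_DQ$, then all $q_i=0$.

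Second, since $Q$ is torsion-free over the domain $D$, the map $Q\to Q\otimes_DF=:V$ is injective. Tensoring the relation gives $\sum_i v_i\otimes v_i=0$ in $V\otimes_FV$, where $v_i$ is the image of $q_i$. For this I use the map $Q\otimes_DQ\to V\otimes_FV$, $q\otimes q'\mapsto q\otimes q'$, which is well defined and $D$-bilinear. So it suffices to show $v_i=0$ for all $i$.

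Third, I prove that a vector space $V$ over a real field $F$ is a real module. The finitely many $v_i$ lie in a finite-dimensional subspace $W$, and $W\otimes_FW\to V\otimes_FV$ is injective because field extensions and inclusions of vector spaces are flat. Choose a basis $w_1,\dots,w_n$ of $W$ and write $v_i=\sum_j c_{ij}w_j$. Then
\[\textstyle\sum_i v_i\otimes v_i=\sum_{j,k}\bigl(\sum_i c_{ij}c_{ik}\bigr)w_j\otimes w_k,\]
and the $w_j\otimes w_k$ form a basis of $W\otimes_F W$. So the diagonal coefficients $\sum_i c_{ij}^2$ all vanish. Since $F$ is real, every $c_{ij}=0$, hence $v_i=0$, and then $q_i=0$ by injectivity. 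The realness of $F$ follows from that of the domain $D$ by clearing denominators: $\sum (a_i/b)^2=0$ gives $\sum a_i^2=0$ in $D$.

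The second claim of the lemma then follows from Lemma~\ref{lem:Np}(a): if $K\neq M$, then $K$ is prime with $(K:M)=\p$, and it is real by the above.

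The main obstacle is the second step, namely the compatibility of tensor products with localization: the map $Q\otimes_DQ\to V\otimes_FV$ must exist and the images of the $q_i$ must be the $v_i$. I will handle this via the identification $(Q\otimes_DQ)\otimes_DF\cong V\otimes_FV$. Only the existence of the map is needed, not its injectivity, because the conclusion is drawn in $V$ and pulled back through the injection $Q\hookrightarrow V$.
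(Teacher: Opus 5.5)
Your proof is correct and follows essentially the same route as the paper. Both arguments reduce to the torsion-free $R/\p$-module $M/K(N,\p)$, embed it in a vector space over the formally real field $\operatorname{Frac}(R/\p)$, identify the tensor products over $R$, $R/\p$ and the fraction field, and read off the diagonal coefficients in a basis. The differences are minor: you map $Q\otimes_DQ$ directly into $V\otimes_FV$ and explicitly justify that the $w_j\otimes w_k$ are independent, whereas the paper proves $V$ real and invokes Lemma~\ref{lem:realbasic}(a); and you get torsion-freeness from the first step of Lemma~\ref{lem:Np}(a), which also covers the case $K(N,\p)=M$ without a separate argument.
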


\begin{proof}
Put $D:=R/\p$, $F:=\operatorname{Frac}(D)$ and $Q':=M/K(N,\p)$; write $\bar r$ for the image of $r\in R$ in $D$. Every element of $F$ has the form $\bar r/\bar s$ with $r\in R$, $s\notin\p$.

\emph{$F$ is formally real.} If $\sum_i(\bar a_i/\bar s)^2=0$, then $\sum_ia_i^2\in\p$, so all $a_i\in\p$ because $\p$ is real, i.e.\ all $\bar a_i/\bar s=0$.

\emph{$Q'$ embeds into an $F$-vector space.} If $K(N,\p)=M$, then $Q'=0$ is real. Otherwise $K(N,\p)$ is a prime submodule with $(K(N,\p):M)=\p$ by Lemma~\ref{lem:Np}(a), so $Q'$ is a torsion-free $D$-module by Lemma~\ref{lem:basic}(a) (cf.\ \cite[Lemma~1]{JS}). Hence $Q'$ embeds into its localization $V:=S^{-1}Q'$ at $S:=D\setminus\{0\}$, which is an $F$-vector space on which $R$ acts through $R\to D\subseteq F$.

\emph{$V$ is a real $R$-module.} We first show $V\otimes_RV=V\otimes_DV=V\otimes_FV$, via $v\otimes w\mapsto v\otimes w$. Since $\p V=0$, also $\p(V\otimes_RV)=0$, so $V\otimes_RV$ is a $D$-module. As $R\to D$ is surjective, a map $V\times V\to X$ into a $D$-module $X$ is $R$-bilinear if and only if it is $D$-bilinear; so $V\otimes_RV$ has the universal property of $V\otimes_DV$, i.e.\ $V\otimes_RV=V\otimes_DV$. Next, every element of $F$ has the form $ds^{-1}$ with $d\in D$ and $0\neq s\in D$, and for $v,w\in V$ we get in $V\otimes_DV$, using $D$-bilinearity for the scalar $s\in D$,
\[(s^{-1}v)\otimes w=(s^{-1}v)\otimes s(s^{-1}w)=s(s^{-1}v)\otimes(s^{-1}w)=v\otimes(s^{-1}w).\]
So the $D$-bilinear map $V\times V\to V\otimes_DV$ is $F$-bilinear, and $V\otimes_DV=V\otimes_FV$. If $\sum_iv_i\otimes v_i=0$, choose a basis $w_1,\dots,w_r$ of a finite-dimensional subspace containing all $v_i$ and write $v_i=\sum_jv_{ij}w_j$; the coefficient of $w_j\otimes w_j$ gives $\sum_iv_{ij}^2=0$ in $F$, so all $v_{ij}=0$ and all $v_i=0$.

Since $Q'$ is isomorphic to a submodule of $V$, it is real by Lemma~\ref{lem:realbasic}(a), i.e.\ $K(N,\p)$ is a real submodule. The last statement now follows from Lemma~\ref{lem:Np}(a).
\end{proof}

The next proposition is the real analogue of Proposition~\ref{rem:dauns}. Part (a) is used in the proof of Theorem~\ref{thmB}; parts (b) and (c) are not used there, but they show that realness can be tested on the ideals $(N:m)$.

\begin{proposition}\label{prop:realann}
Let $N$ be a submodule of an $R$-module $M$.
\begin{enumerate}
\item[(a)] If $N$ is real, then $(N:m)$ is a real ideal for every $m\in M$.
\item[(b)] If $N$ is semiprime and $(N:m)$ is a real ideal for every $m\in M$, then $N$ is real.
\item[(c)] If $R$ is a Dedekind domain and $(N:m)$ is a real ideal for every $m\in M$, then $N$ is real.
\end{enumerate}
\end{proposition}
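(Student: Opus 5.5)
We prove the three parts separately, working throughout in $Q:=M/N$, where $(N:m)=\Ann_R(\bar m)$ by Lemma~\ref{lem:basic}(b).

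\emph{Part (a).} Let $q\in Q$ and suppose $\sum_ia_i^2\in\Ann_R(q)$. Bilinearity gives
\[\textstyle\sum_i(a_iq)\otimes(a_iq)=\bigl(\sum_ia_i^2\bigr)(q\otimes q)=\bigl(\sum_ia_i^2\bigr)q\otimes q=0\ \text{ in }Q\otimes_RQ.\]
Since $Q$ is real, every $a_iq=0$, so every $a_i\in\Ann_R(q)$. Taking $q=\bar m$ shows that $(N:m)$ is a real ideal.

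\emph{Part (b).} We follow the proof of Theorem~\ref{thmA}, restricting to real primes. We show
\[N=\bigcap\{K(N,\p):\ \p\text{ a real prime ideal}\}.\]
The inclusion $\subseteq$ is clear. For $\supseteq$, let $m\notin N$. Then $Q$ is semiprime and $\bar m\neq0$, so $I:=(N:m)$ is proper. It is radical by Proposition~\ref{rem:dauns}(a), and real by hypothesis. Choose a minimal prime $\p$ over $I$. By Lemma~\ref{lem:minprime}(b), $\p$ is real. By Lemma~\ref{lem:key} applied to $\bar m\in Q$, we get $sm\notin N+\p M$ for every $s\notin\p$, so $m\notin K(N,\p)$. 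Each such $K(N,\p)$ is a real submodule by Lemma~\ref{lem:Kreal}. Hence $N$, being an intersection of real submodules, is real by Lemma~\ref{lem:realbasic}(c).

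\emph{Part (c).} A real ideal is radical: if $a^2\in I$, view $a^2$ as a sum of one square, so realness gives $a\in I$. Thus every $(N:m)$ is radical, and Proposition~\ref{rem:dauns}(b) shows that $N$ is semiprime. Part (b) then shows that $N$ is real.

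There is no serious obstacle here. The only point needing care is in (b): we must pick the separating prime to be minimal over $(N:m)$, since that is exactly what Lemma~\ref{lem:minprime}(b) needs in order to conclude that $\p$ is real.
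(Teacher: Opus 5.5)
Your proposal is correct and follows essentially the same route as the paper: (a) uses the same tensor identity; (b) separates each $m\notin N$ from $N$ by $K(N,\p)$, with $\p$ a minimal prime over $(N:m)$, which is real by Lemma~\ref{lem:minprime}(b), using Lemma~\ref{lem:key} and Lemma~\ref{lem:Kreal}; and (c) reduces to (b) via Proposition~\ref{rem:dauns}(b). The only cosmetic difference is that in (b) you work with $K(N,\p)$ in $M$, mirroring the proof of Theorem~\ref{thmA}, while the paper works with $K(0,\p)$ in $Q=M/N$.
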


\begin{proof}
We have $(N:m)=\Ann_R(\bar m)$, and $N$ is real if and only if $Q:=M/N$ is real.

(a) If $\sum_ia_i^2\in\Ann_R(q)$, then $\sum_i(a_iq)\otimes(a_iq)=\bigl((\sum_ia_i^2)q\bigr)\otimes q=0$, so $a_iq=0$ for all $i$.

(b) By Lemma~\ref{lem:basic}(b), $Q$ is semiprime. For $0\neq q\in Q$, let $\p$ be a minimal prime over $\Ann_R(q)$. Then $\p$ is real by Lemma~\ref{lem:minprime}(b), and $q\notin K(0,\p)$ by Lemma~\ref{lem:key}. Hence $0=\bigcap_{\p\text{ real}}K(0,\p)$ in $Q$, an intersection of real submodules (Lemma~\ref{lem:Kreal}), so $Q$ is real by Lemma~\ref{lem:realbasic}(c).

(c) Real ideals are radical, so $N$ is semiprime by Proposition~\ref{rem:dauns}(b); apply (b).
\end{proof}

If $N$ is prime, then $(N:m)=(N:M)$ for $m\notin N$ and $(N:m)=R$ for $m\in N$; so by (a) and (b), a prime submodule $N$ is real if and only if $(N:M)$ is a real ideal. For a semiprime submodule it is not enough that $(N:M)$ is real: for $M=\Z\oplus\Z/2$ and $N=0$ we have $(N:M)=0$, but $(N:(0,\bar1))=2\Z$ is not real.

\begin{example}[The converse of Proposition~\ref{prop:realann}(a) fails over $\R{[x,y]}$]\label{ex:JS}
This is the example of \cite[Theorem~11]{JS}. Let $R=\R[x,y]$, $J:=Rx+Ry\subseteq R$, $f:=(x,y)\in F:=R \oplus R$ and $N:=Jf=\{rf:\ r\in J\}$. 
We claim that $(N:m)\in\{0,J,R\}$ for every $m =(u,v)\in F$. Suppose there exists $0 \ne r \in (N:m)$. Then $ru=tx$ and $rv=ty$ for some $t\in J$, hence $r(uy-vx)=0$, and so $uy=vx$, as $r\neq0$.
It follows that $x\mid u$ (as $x\nmid y$), say $u=xs$, and $v=ys$, so $m=sf$ for some $s \in R$.
Then $rm\in N$ if and only if $rs\in J$ (as $R$ is a domain and $f\neq0$), so $(N:m)=(J:s)$, which is $J$ if $s\notin J$ and $R$ if $s\in J$.
This proves the claim, and all three ideals are real, as $R$ and $R/J\cong\R$ are real rings. To see that $N$ is not semiprime, we show that $f\in(N:f)F$ but $f\notin N$. Indeed, $(N:f)=J$ and $f=x(1,0)+y(0,1)\in JF=(N:f)F$, while $f\notin N$, since the coordinates of elements of $N$ lie in $J^2$. Hence $N$ is not semiprime, and therefore not real by Lemma~\ref{lem:realbasic}(d).
\end{example}

\begin{proof}[Proof of Theorem~\ref{thmB}]
(i)$\Rightarrow$(ii): Let $m\notin N$ and $Q:=M/N$. By Lemma~\ref{lem:realbasic}(d) and Proposition~\ref{prop:realann}(a), $Q$ is semiprime and $\Ann_R(\bar m)$ is a real, hence radical, ideal. Let $\p$ be a minimal prime over $\Ann_R(\bar m)$. Then $\p$ is real (Lemma~\ref{lem:minprime}(b)), and $m\notin K(N,\p)$ by Lemma~\ref{lem:key}.
(ii)$\Rightarrow$(iii): each $K(N,\p)\neq M$ is a real prime submodule by Lemma~\ref{lem:Kreal}.
(iii)$\Rightarrow$(i): Lemma~\ref{lem:realbasic}(c).

For the consequences, note first that the real submodules containing $N$ are closed under intersections (Lemma~\ref{lem:realbasic}(c)), and that each of them is an intersection of real prime submodules containing $N$. Next, if a proper submodule $N$ of $M$ is real, then (iii) provides a real prime submodule containing $N$; conversely, a real prime submodule is proper. Finally, if $N$ is maximal among the proper real submodules, every real prime submodule containing $N$ equals $N$, and there is at least one.
\end{proof}

\begin{example}[Real modules]\label{ex:cyclic}
(1) Since $R/I\otimes_RR/I=R/I$, the module $R/I$ is real if and only if $I$ is a real ideal. Thus $\Z/p$ is prime but not real, since $1^2+\dots+1^2$ ($p$ terms) lies in $p\Z$, and for the prime ideal $I=(x^2+y^2+1)$ of $\R[x,y]$ the module $\R[x,y]/I$ is prime but not real, as $-1$ is a sum of squares in it. So real modules form a proper subclass of semiprime modules.

(2) If $R$ is real, then every free module $R^{(I)}$ is real by Lemma~\ref{lem:realbasic}(b), hence so is every submodule of a free module, in particular every projective module (Lemma~\ref{lem:realbasic}(a)); for instance, every projective module over $\R[x_1,\dots,x_d]$ is real.

(3) If $R$ is a principal ideal domain, then an $R$-module $M$ is real if and only if $\Ann_R(q)$ is a real ideal for every $q\in M$, by Proposition~\ref{prop:realann}(a),(c). For $R=\Z$ the only real ideals are $0$ and $\Z$, so an abelian group is real if and only if it is torsion-free.
\end{example}

\begin{example}[A decomposition into real primes]\label{ex:realinf}
Let $R=\R[x,y]$ and $M=\bigoplus_{a\in\R}R/(x-a)$, which is not finitely generated. Each summand $R/(x-a)\cong\R[y]$ is real, so $M$ is real (Lemma~\ref{lem:realbasic}(b)). For $b\neq a$, the element $x-a$ acts on $R/(x-b)$ as the nonzero scalar $b-a$, so $(x-a)M=\bigoplus_{b\neq a}R/(x-b)$. We claim $K(0,(x-a))=(x-a)M$. For $\supseteq$ take $s=1$. For $\subseteq$, if $s\notin(x-a)$ and $sq\in(x-a)M$, then the $a$-component of $sq$ is $0$; as $s$ acts on $R/(x-a)\cong\R[y]$ as the nonzero polynomial $s(a,y)$, the $a$-component of $q$ is $0$. Hence $0=\bigcap_{a\in\R}K(0,(x-a))$ is the decomposition of Theorem~\ref{thmB}, into the real primes $K(0,(x-a))$ with $(K(0,(x-a)):M)=(x-a)$.
\end{example}

\section{Comparison with the coordinate definition on free modules}\label{sec:comparison}

For a submodule $N$ of $R^n$, the following coordinate condition is used in \cite{C22b} (see \cite{C13} for its relation to real left ideals of $M_n(R)$):
\[(\mathrm{C})\qquad f_1,\dots,f_k\in R^n,\ \ \textstyle\sum_if_{i,j}f_i\in N\ \text{ for }j=1,\dots,n\quad\Longrightarrow\quad f_i\in N\ \text{ for all }i,\]
where $f_{i,j}$ is the $j$-th coordinate of $f_i$. We show that it is equivalent to Definition~\ref{def:real}; the implication (C)$\Rightarrow$(real) uses the decomposition machinery.

\begin{lemma}\label{lem:tensorM}
Let $N$ be a submodule of an $R$-module $M$. Then $N$ is real if and only if $\sum_im_i\otimes m_i\in\operatorname{im}(M\otimes N+N\otimes M\to M\otimes M)$ implies $m_i\in N$ for all $i$.
\end{lemma}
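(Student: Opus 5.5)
The plan is to use right exactness of the tensor product to identify $(M/N)\otimes_R(M/N)$ with a quotient of $M\otimes_R M$. Write $\pi:M\to M/N$ for the canonical map. The map $\pi\otimes\pi:M\otimes_RM\to(M/N)\otimes_R(M/N)$ is surjective. The standard consequence of right exactness, applied to the two surjections $M\to M/N$ with kernels $N$, is that its kernel is exactly
\[\operatorname{im}(M\otimes N\to M\otimes M)+\operatorname{im}(N\otimes M\to M\otimes M),\]
that is, $\operatorname{im}(M\otimes N+N\otimes M\to M\otimes M)$. To justify this, I would factor $\pi\otimes\pi$ as
\[M\otimes M\xrightarrow{\ \pi\otimes 1\ }(M/N)\otimes M\xrightarrow{\ 1\otimes\pi\ }(M/N)\otimes(M/N).\]
Right exactness of $-\otimes M$ shows that the kernel of the first map is the image of $N\otimes M$. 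Right exactness of $(M/N)\otimes-$ shows that the kernel of the second map is the image of $(M/N)\otimes N$. That image lifts to the image of $M\otimes N$, because $M\otimes N\to(M/N)\otimes N$ is surjective. Hence the kernel of the composite is the sum of the two images.

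With this identification both directions are immediate. By definition, $N$ is real exactly when $M/N$ is a real module. Let $m_1,\dots,m_k\in M$. Then $\sum_i\bar m_i\otimes\bar m_i=(\pi\otimes\pi)\bigl(\sum_im_i\otimes m_i\bigr)$, which vanishes if and only if $\sum_im_i\otimes m_i$ lies in the kernel described above.

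Every finite family $\bar m_1,\dots,\bar m_k$ in $M/N$ lifts to some family $m_i$ in $M$. So the realness condition on $M/N$, namely $\sum\bar m_i\otimes\bar m_i=0\Rightarrow\bar m_i=0$, translates verbatim into the following condition: if $\sum m_i\otimes m_i$ lies in the image of $M\otimes N+N\otimes M$, then $m_i\in N$. This translation gives both implications of the lemma.

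The only step needing care is the kernel computation. In particular, one must note that the images of $M\otimes N$ and $N\otimes M$ are taken in $M\otimes M$, since the maps need not be injective. The factorization argument handles this without any flatness assumption.
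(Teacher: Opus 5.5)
Your proof is correct and has the same structure as the paper's: identify $\ker(\pi\otimes\pi)$ with the submodule $L$ of $M\otimes M$ generated by the images of $M\otimes N$ and $N\otimes M$, then transfer realness of $M/N$ through lifts of the $\bar m_i$. The only difference is how the kernel is computed: you factor $\pi\otimes\pi$ through $(M/N)\otimes M$ and apply right exactness twice (the lift via the surjection $M\otimes N\to(M/N)\otimes N$ uses the obvious commuting square). The paper instead builds an explicit inverse $(M/N)\otimes(M/N)\to(M\otimes M)/L$ from a well-defined bilinear map. Both arguments are standard and neither needs flatness.
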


\begin{proof}
Let $Q:=M/N$. Let $\pi:M\to Q$ be the quotient map, and let $L\subseteq M\otimes M$ be the submodule generated by all $m\otimes n$ and $n\otimes m$ with $m\in M$, $n\in N$; this is $\operatorname{im}(M\otimes N+N\otimes M\to M\otimes M)$. We claim that $\ker(\pi\otimes\pi)=L$. This is the case $\varphi=\psi=\pi$ of \cite[Theorem~2.19]{Conrad}, which describes the kernel of $\varphi\otimes\psi$ for surjective $\varphi,\psi$; for completeness we recall the argument. Clearly $\pi\otimes\pi$ is surjective and vanishes on $L$, so it induces $\alpha:(M\otimes M)/L\to Q\otimes Q$, $m\otimes m'+L\mapsto\bar m\otimes\bar m'$. Conversely, $(\bar m,\bar m')\mapsto m\otimes m'+L$ is well defined, since replacing $m,m'$ by $m+n,m'+n'$ with $n,n'\in N$ changes $m\otimes m'$ by $n\otimes m'+m\otimes n'+n\otimes n'\in L$; it is $R$-bilinear, so it induces $\beta:Q\otimes Q\to(M\otimes M)/L$ with $\beta(\bar m\otimes\bar m')=m\otimes m'+L$. As $\alpha\beta$ and $\beta\alpha$ are the identity on generators, $\alpha$ is an isomorphism and $\ker(\pi\otimes\pi)=L$. Since $(\pi\otimes\pi)(\sum_im_i\otimes m_i)=\sum_i\bar m_i\otimes\bar m_i$, we get $\sum_i\bar m_i\otimes\bar m_i=0$ if and only if $\sum_im_i\otimes m_i\in L$, so the two conditions in the lemma have the same hypothesis and the same conclusion. (If $M$ is flat, e.g.\ free, then $M\otimes N$ and $N\otimes M$ embed into $M\otimes M$ and $L=M\otimes N+N\otimes M$.)
\end{proof}

\begin{mainthm}\label{thmC}
For a submodule $N\subseteq R^n$ the following are equivalent:
\begin{enumerate}
\item[(i)] $N$ is real \textup{(}Definition~\ref{def:real}\textup{)};
\item[(ii)] $\sum_if_i\otimes f_i\in R^n\otimes N+N\otimes R^n$ implies $f_i\in N$ for all $i$;
\item[(iii)] $N$ satisfies \textup{(C)}.
\end{enumerate}
\end{mainthm}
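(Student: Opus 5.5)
(i)$\Leftrightarrow$(ii) is Lemma~\ref{lem:tensorM} applied to $M=R^n$. Since $R^n$ is free, hence flat, the image of $R^n\otimes N+N\otimes R^n$ in $R^n\otimes R^n$ is exactly the submodule $R^n\otimes N+N\otimes R^n$, as remarked at the end of that proof. So it remains to relate (ii) and (C).

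For (ii)$\Rightarrow$(iii) we identify $R^n\otimes R^n$ with $R^{n\times n}$ via $e_j\otimes e_l\mapsto E_{jl}$, so that $f\otimes g\mapsto fg^T$. Under this identification $\sum_if_i\otimes f_i$ has $j$-th row $\sum_if_{i,j}f_i^T$. If the hypothesis of (C) holds, then every row $\sum_if_{i,j}f_i$ lies in $N$. Hence $\sum_if_i\otimes f_i=\sum_je_j\otimes\bigl(\sum_if_{i,j}f_i\bigr)\in R^n\otimes N$, and (ii) gives $f_i\in N$.

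For (iii)$\Rightarrow$(i) the tensor condition is weaker than (C): membership in $R^n\otimes N+N\otimes R^n$ does not force the rows to lie in $N$. So I would use the decomposition machinery, as the text announces.

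\emph{Step 1: (C) implies semiprime.} Let $f\in(N:f)R^n$ and $I:=(N:f)$. Since $e_j\in R^n$, every coordinate $f_j$ lies in $I$, so $f_jf\in N$ for all $j$. Now apply (C) with $k=1$: the hypothesis $\sum_if_{i,j}f_i=f_jf\in N$ holds for every $j$, so $f\in N$.

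\emph{Step 2: every $(N:m)$ is a real ideal.} Suppose $\sum_ia_i^2\in(N:m)$ and put $f_i:=a_im$. Then $\sum_if_{i,j}f_i=m_j\sum_ia_i^2m=m_j\cdot(\sum_ia_i^2)m\in N$ for every $j$, so (C) gives $a_im\in N$.

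By Proposition~\ref{prop:realann}(b), Steps 1 and 2 together show that $N$ is real.

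The step I expected to be the main obstacle is exactly this last implication, since a direct tensor manipulation seems hopeless. Routing it through Proposition~\ref{prop:realann}(b) reduces it to the two short coordinate checks above.
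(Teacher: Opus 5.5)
Your proof is correct and follows the paper's argument. You get (i)$\Leftrightarrow$(ii) from Lemma~\ref{lem:tensorM} plus flatness. You use the same identity $\sum_if_i\otimes f_i=\sum_je_j\otimes\sum_if_{i,j}f_i$ to reach (C); you run it in $R^n\otimes R^n$ via (ii), the paper in $Q\otimes Q$ via (i). For (iii)$\Rightarrow$(i) you make the same two checks and then cite Proposition~\ref{prop:realann}(b), where the paper repeats that minimal-prime argument inline.

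One small point of justification: in Step 1, $f_j\in(N:f)$ holds because $IR^n=I^{\oplus n}$ (the coordinates of $\sum_kb_ky_k$ with $b_k\in I$ lie in $I$), not because $e_j\in R^n$.
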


\begin{proof}
Put $Q:=R^n/N$. (i)$\Leftrightarrow$(ii) is Lemma~\ref{lem:tensorM}, as $R^n$ is flat.

(i)$\Rightarrow$(iii): If $\sum_if_{i,j}\bar f_i=0$ in $Q$ for all $j$ where $f_i=\sum_jf_{i,j}e_j$,
\[\textstyle\sum_i\bar f_i\otimes\bar f_i=\sum_j\bar e_j\otimes\sum_if_{i,j}\bar f_i=0\quad\text{in }Q\otimes Q,\]
so $\bar f_i=0$ for all $i$.

(iii)$\Rightarrow$(i): First, $Q$ is semiprime: if $f\in(N:f)R^n$, then $f_j\in(N:f)$ for all $j$, i.e.\ $f_jf\in N$, which is (C) for the single vector $f$; so $f\in N$. Second, $(N:f)$ is real for every $f$: if $\sum_ka_k^2f\in N$, the vectors $a_kf$ satisfy $\sum_k(a_kf_j)(a_kf)=f_j\bigl(\sum_ka_k^2f\bigr)\in N$ for all $j$, so $a_kf\in N$ by (C). Now let $f\notin N$ and let $\p$ be a minimal prime over the real, hence radical, ideal $(N:f)=\Ann_R(\bar f)$. By Lemma~\ref{lem:minprime}(b), $\p$ is real, and by Lemma~\ref{lem:key}, $f\notin K(N,\p)$. So $N=\bigcap_{\p\text{ real}}K(N,\p)$ is an intersection of real submodules (Lemma~\ref{lem:Kreal}), hence real.
\end{proof}

By Theorem~\ref{thmC}, Theorem~\ref{thmB} applied to $M=R^n$ decomposes every submodule satisfying (C) into real prime submodules of $R^n$. The same argument, applied to a presentation $R^{(I)}\twoheadrightarrow M$, shows that for every module $M$ Definition~\ref{def:real} is equivalent to the coordinate condition on the preimage of $N$ in $R^{(I)}$.

\section{Remarks and questions}\label{sec:remarks}

\begin{remark}[Semiprime submodules over noncommutative rings]\label{rem:ncsemi}
For a noncommutative ring $R$ and a left $R$-module $M$, replace $(N:m)$ by $\langle N:m\rangle:=\{r\in R:\ rRm\subseteq N\}$, the annihilator of the submodule $R(m+N)$ of $M/N$, which is a two-sided ideal; call $N$ \emph{semiprime} if $m\in\langle N:m\rangle M$ implies $m\in N$ (the element annihilator $(N:m)$ would be too strong: for $R=M_2(k)$, $M=R$, $N=0$ and $a=E_{12}$ we have $a\in(0:a)R$, as $a^2=0$, although $0$ is semiprime in the classical sense $aRa\subseteq N\Rightarrow a\in N$). For $M=R^n$ this agrees with the definition in \cite{C22a} ``$m_iRm\subseteq N$ for $i=1,\dots,n$ implies $m\in N$'': as in \cite[Proposition~2.1]{Ary}, two-sidedness gives $\langle N:m\rangle R^n=\langle N:m\rangle^n$, so $m\in\langle N:m\rangle R^n$ if and only if $m_i\in\langle N:m\rangle$, i.e.\ $m_iRm\subseteq N$, for all $i$. For $n=1$ one recovers the classical semiprime one-sided ideals, $aRa\subseteq N\Rightarrow a\in N$ (see \cite[\ensuremath{\mathsection}1.1]{Dauns} and \cite[\ensuremath{\mathsection}1]{Sarac}), the semiprime analogue of Koh's one-sided ideals of prime type, $aRb\subseteq N\Rightarrow a\in N$ or $b\in N$ \cite{Koh}; prime submodules ($rRm\subseteq P\Rightarrow m\in P$ or $rM\subseteq P$ \cite{Dauns}) are semiprime in this sense. The semiprime submodules of \cite{Sarac} are of Dauns' type \cite[Definition~1.9(ii)]{Dauns}: $N\neq M$ and $rRrm\subseteq N$ implies $rm\in N$. Every proper semiprime submodule in our sense has this property (if $rRrm\subseteq N$, then $r\in\langle N:rm\rangle$, so $rm\in\langle N:rm\rangle M$), but not conversely, already over commutative rings \cite[Theorem~11]{JS}; for $M=R$ both notions give the semiprime left ideals.
\end{remark}

\begin{remark}[Real modules over $*$-rings]\label{rem:nc}
Let $R$ be a $*$-ring, i.e.\ a ring with involution $*$, and $M$ a left $R$-module. The \emph{conjugate module} $M^*$ is $M$ with the right action $m\cdot r:=r^*m$, so the tensor product $M^*\otimes_RM$ of a right and a left $R$-module is defined. For noncommutative $R$ it is only an abelian group: it is generated by the $m\otimes n$ subject to biadditivity and $(r^*m)\otimes n=m\otimes rn$. Call $M$ \emph{real} if $\sum_im_i\otimes m_i=0$ in $M^*\otimes_RM$ implies $m_i=0$ for all $i$; for commutative $R$ with the trivial involution this is Definition~\ref{def:real}.

For a left ideal $I$, the map $\bar a\otimes\bar b\mapsto a^*b+(I+I^*)$ is an isomorphism of abelian groups $(R/I)^*\otimes_R(R/I)\cong R/(I+I^*)$; here $I+I^*$ is only an additive subgroup of $R$, in general neither a left nor a right ideal. Hence $R/I$ is real if and only if $\sum_ia_i^*a_i\in I+I^*$ implies $a_i\in I$ for all $i$, which is the notion of a real left ideal used in \cite{CHKMN}.

Real modules are semiprime in the sense of Remark~\ref{rem:ncsemi}. Indeed, let $q\in JM$ with $J:=\Ann_R(Rq)$, say $q=\sum_jb_jy_j$ with $b_j\in J$. For $b\in J$ and $r\in R$, the element $z:=b^*rq=(rq)\cdot b$ satisfies $z\otimes z=(rq)\otimes bb^*rq=0$, so $z=0$; thus $J^*\subseteq J$. Therefore $q\otimes q=\sum_j(y_j\cdot b_j^*)\otimes q=\sum_jy_j\otimes b_j^*q=0$, and $q=0$. In particular, real left ideals are semiprime. The proofs of Theorems~\ref{thmA} and \ref{thmB} rely on localization at primes and do not extend directly.
\end{remark}

\begin{question}\label{q:nc}
For which noncommutative rings (with involution) is every semiprime, resp.\ real, submodule an intersection of prime, resp.\ real prime, submodules?
\end{question}

\begin{question}\label{q:inv}
Let $R$ be commutative with a nontrivial involution. Is every real submodule, in the sense of Remark~\ref{rem:nc}, an intersection of real prime submodules?
\end{question}

\section*{Acknowledgements}
During the preparation of this manuscript, the author utilized the AI assistants Claude Sonnet 5 and Claude Opus 5.5 (Anthropic) for conceptual discussions regarding proofs and literature, as well as for editing assistance. All arguments and references were independently verified by the author, who assumes full responsibility for the final content of this work.

\bibliographystyle{amsplain}

\end{document}